\documentclass[10pt]{my_elsarticle}

\usepackage{a4wide}

\usepackage{amsmath,amsbsy,amssymb,latexsym}
\usepackage{amsthm}
\usepackage{url}


\newtheorem{thm}{Theorem}
\newtheorem{cor}{Corollary}

\theoremstyle{remark}
\newtheorem{rem}{Remark}

\theoremstyle{definition}

\newcommand{\T}{\mathbb{T}}
\newcommand{\R}{\mathbb{R}}
\newcommand{\Z}{\mathbb{Z}}

\newcommand{\rar}{\mbox{$\rightarrow$}}


\begin{document}

\begin{frontmatter}

\title{Optimality conditions for the calculus of variations\\
with higher-order delta derivatives\footnote{Submitted 26/Jul/2009;
Revised 04/Aug/2010; Accepted 09/Aug/2010;
for publication in \emph{Appl. Math. Lett.}}}

\author[lx]{Rui A. C. Ferreira}
\ead{ruiacferreira@ua.pt}

\author[pl]{Agnieszka B. Malinowska}
\ead{abmalinowska@ua.pt}

\author[av]{Delfim F. M. Torres\corref{cor1}}
\ead{delfim@ua.pt}

\cortext[cor1]{Corresponding author}

\address[lx]{Department of Mathematics,
Faculty of Engineering and Natural Sciences,
Lusophone University of Humanities and Technologies,
1749-024 Lisbon, Portugal}

\address[pl]{Department of Mathematics,
Faculty of Computer Science,
Bia{\l}ystok University of Technology,
15-351 Bia\l ystok, Poland}

\address[av]{Department of Mathematics,
Center for Research and Development in Mathematics and Applications,
University of Aveiro,
Campus Universit\'{a}rio de Santiago,
3810-193 Aveiro, Portugal}


\begin{abstract}
We prove the Euler-Lagrange delta-differential equations
for problems of the calculus of variations on arbitrary time scales
with delta-integral functionals depending on higher-order delta derivatives.
\end{abstract}

\begin{keyword}
calculus of variations \sep Euler-Lagrange equation \sep
higher-order delta derivatives \sep arbitrary time scales.

\MSC[2010] 49K05 \sep 26E70 \sep 34N05.

\end{keyword}

\end{frontmatter}


\section{Introduction}

In recent years numerous works have been dedicated to the calculus of variations
on time scales and their generalizations ---
see \cite{MyID:140,MyID:175,MyID:180,MyID:170,MyID:141,MyID:183,MyID:187,MyID:171,MyID:174}
and the references therein.
Most of them deal with delta or nabla derivatives of first-order
\cite{Ric:Del,Atici06,Atici09,Zbig:Del,CD:Bohner:2004,FT:Rem,iso:ts,zeidan,Agn:Del,Agn:Del2},
only a few with higher-order derivatives \cite{FT,MT}.
Depending on the type of the functional being considered, different time scale Euler-Lagrange
type equations are obtained. For variational problems of first-order
the Euler-Lagrange equations are valid for an arbitrary
time scale $\mathbb{T}$, while for the problems with higher-order delta (or nabla)
derivatives they are only valid in a certain class of time scales, more precisely,
the ones for which the forward (or backward) jump operator is a polynomial
of degree one \cite{FT,MT}. Here we consider variational problems
involving Hilger derivatives of higher order, and prove a necessary optimality condition
of the Euler-Lagrange type on an arbitrary time scale, \textrm{i.e.},
without imposing any restriction to the jump operators.


\section{Preliminaries}
\label{sec:Prel}

Here we recall some basic results and notation needed in the sequel.
For the theory of time scales we refer the reader to
\cite{Agarwal,livro,Hilger90,Hilger97}.

A time scale $\T$ is an arbitrary nonempty closed subset of the real
numbers $\R$. The functions $\sigma:\T \rar \T$ and $\rho:\T \rar
\T$ are, respectively, the forward and backward jump operators:
$\sigma(t)=\inf{\{s\in\mathbb{T}:s>t\}}$ with
$\inf\emptyset=\sup\mathbb{T}$ (\textrm{i.e.}, $\sigma(M)=M$ if
$\mathbb{T}$ has a maximum $M$);
$\rho(t)=\sup{\{s\in\mathbb{T}:s<t\}}$ with
$\sup\emptyset=\inf\mathbb{T}$ (\textrm{i.e.}, $\rho(m)=m$ if
$\mathbb{T}$ has a minimum $m$). The symbol $\emptyset$ denotes
the empty set. The graininess function on $\T$ is defined
by $\mu(t):=\sigma(t)-t$. For $\T=\R$
one has $\sigma(t)=t=\rho(t)$ and $\mu(t) \equiv 0$ for any $t \in
\R$. For $\T=\Z$ one has $\sigma(t)=t+1$, $\rho(t)=t-1$, and $\mu(t)
\equiv 1$ for every $t \in \Z$. A point $t\in\mathbb{T}$ is called
right-dense, right-scattered, left-dense, or left-scattered, if
$\sigma(t)=t$, $\sigma(t)>t$, $\rho(t)=t$, or $\rho(t)<t$,
respectively.

Let $\mathbb{T}=[a,b]\cap\mathbb{T}_{0}$ with $a<b$ and
$\mathbb{T}_0$ a time scale. We define
$\mathbb{T}^\kappa:=\mathbb{T}\backslash(\rho(b),b]$, and
$\mathbb{T}^{\kappa^0} := \mathbb{T}$,
$\mathbb{T}^{\kappa^n}:=\left(\mathbb{T}^{\kappa^{n-1}}\right)^\kappa$
for $n\in\mathbb{N}$. The following standard notation is used for
$\sigma$ (and $\rho$): $\sigma^0(t) = t$, $\sigma^n(t) = (\sigma
\circ \sigma^{n-1})(t)$, $n \in \mathbb{N}$.

We say that a function $f:\mathbb{T}\rightarrow\mathbb{R}$ is
\emph{delta-differentiable} at $t\in\mathbb{T}^\kappa$
if there is a number $f^{\Delta}(t)$
such that for all $\varepsilon>0$ there exists a neighborhood $U$
of $t$ such that
$$
\left|f(\sigma(t))-f(s)-f^{\Delta}(t)(\sigma(t)-s)\right|
\leq\varepsilon|\sigma(t)-s|,\quad\mbox{ for all $s\in U$}.
$$
We call $f^{\Delta}(t)$ the \emph{delta-derivative} of $f$ at $t$.
We note that if the number $f^\Delta(t)$ exists then it is unique
in $\mathbb{T}^\kappa$ (see \cite{Hilger90,Hilger97}).
In the special cases $\T=\R$ and $\T=\Z$,
$f^\Delta$ reduces to the standard derivative $f'(t)$
and the forward difference $\Delta f(t) = f(t+1)-f(t)$, respectively.
Whenever $f^\Delta$ exists, the following formula holds:
$f^\sigma(t)=f(t)+\mu(t)f^\Delta(t)$,
where we abbreviate $f\circ\sigma$ by $f^\sigma$.
Let $f^{\Delta^{0}} = f$. We define the
$r$th-delta derivative of $f:\mathbb{T}^{\kappa^r}\rightarrow
\R$, $r\in\mathbb{N}$, to be the function $\left(f^{\Delta^{r-1}}\right)^\Delta$,
provided $f^{\Delta^{r-1}}$ is delta differentiable on $\mathbb{T}^{\kappa^r}$.

A function $f:\mathbb{T} \to \mathbb{R}$ is called rd-continuous if
it is continuous at the right-dense points in $\mathbb{T}$ and its
left-sided limits exist at all left-dense points in $\mathbb{T}$. A
function $f:\mathbb{T} \to \mathbb{R}^n$ is rd-continuous if all its
components are rd-continuous. The set of all rd-continuous functions
is denoted by $C_{rd}$. Similarly, $C^r_{rd}$ will denote the set of
functions with delta derivatives up to order $r$ belonging to
$C_{rd}$. A function $f$ is of class $f\in C_{prd}^r$ if
$f^{\Delta^i}$ is continuous for $i = 0,\ldots,r-1$, and
$f^{\Delta^r}$ exists and is rd-continuous for all, except possibly
at finitely many $t \in \mathbb{T}^{\kappa^r}$.

A piecewise rd-continuous function $f:\mathbb{T} \to \mathbb{R}$
possess an antiderivative $F^{\Delta}=f$, and in this case the delta
integral is defined by $\int_{c}^{d}f(t)\Delta t=F(d)-F(c)$ for all
$c,d\in\T$. It satisfies
$$\int_t^{\sigma(t)}f(\tau)\Delta\tau=\mu(t)f(t).$$
If $\mathbb{T}=\mathbb{R}$, then $\int\limits_{a}^{b}  f(t) \Delta
t=\int\limits_{a}^{b}f(t)dt$, where the integral on the right hand
side is the usual Riemann integral; if $\mathbb{T}=\mathbb{Z}$ and $a<b$,
then $\int\limits_{a}^{b} f(t) \Delta t=\sum\limits_{k=a}^{b-1}f(k)$.


\section{Main results}
\label{sec:mainResults}

Consider the following higher-order problem of the calculus of
variations up to order $r$, $r \ge 1$:
\begin{equation}
\label{eq:prb} \mathcal{L}(y(\cdot)) = \int_{a}^{\rho^{r-1}(b)}
 L(t,y(t),y^\Delta(t),\ldots,y^{\Delta^r}(t))\Delta
t\longrightarrow\min,
\end{equation}
subject to boundary conditions
\begin{equation}
\label{problema }
y(a)=y_a^0, \quad y\left(\rho^{r-1}(b)\right)=y_b^0 \, ,
\cdots \, , y^{\Delta^{r-1}}(a)=y_a^{r-1}, \quad
y^{\Delta^{r-1}}\left(\rho^{r-1}(b)\right)=y_b^{r-1},
\end{equation}
where $\mathbb{T}$ is a bounded time scale with $a:=\min\T$
and $b:=\max\T$, $L:[a,\rho^{r}(b)]_{\T}
\times \mathbb{R}^{r+1}\rightarrow\mathbb{R}$ is a given function,
where we use the notation $[c,d]_{\T}:=[c,d]\cap \T$,
and $y_a^i, y_b^i\in\mathbb{R}$, $i=0,\ldots,r-1$.
The results of the paper are trivially generalized for functions
$y : [a,b]_{\T} \rightarrow\mathbb{R}^n$, but for simplicity of
presentation we restrict ourselves to the scalar case $n=1$.

A function $y(\cdot)\in C_{prd}^{r}$ is said to be admissible if it
is satisfies condition \eqref{problema }.
An admissible $y(\cdot)$ is a \emph{weak local minimizer} for
\eqref{eq:prb}--\eqref{problema } if there exists $\delta>0$ such
that $\mathcal{L}(y(\cdot))\leq\mathcal{L}(\bar{y}(\cdot))$ for any
admissible $\bar{y}\in\textrm{C}_{prd}^r$ with
$\|y-\bar{y}\|_{r,\infty}<\delta$, where
$$||y||_{r,\infty} := \sum_{i=0}^{r} \left\|y^{\Delta^i}\right\|_{\infty},$$
$y^{\Delta^0} = y$ and $||y||_{\infty}:= \sup_{t \in
[a,\rho^{r}(b)]_{\T}} |y(t)|$.
For simplicity of notation we introduce the operator $[y]$  defined by
$[y](t) = \left(t,y(t),y^\Delta(t),\ldots,y^{\Delta^r}(t)\right)$.
Then, functional \eqref{eq:prb} can be written as
\begin{equation*}
\mathcal{L}(y(\cdot)) = \int_{a}^{\rho^{r-1}(b)} L[y](t) \Delta t.
\end{equation*}
We assume that $(t,u_1,\ldots,u_{r+1}) \rightarrow
L(t,u_1,\ldots,u_{r+1})$ has continuous partial derivatives
$\frac{\partial L}{\partial u_{i}}$ for all
$t\in[a,\rho^{r}(b)]_{\T} $, $i=1,\ldots,r+1$,
and $t\rightarrow L[y](t)$ and $t\rightarrow\frac{\partial L}{\partial u_{i}}[y](t)$,
$i=1,\ldots,r+1$, are piecewise rd-continuous for all admissible functions $y(\cdot)$.


\subsection{The higher-order Euler-Lagrange equation}
\label{subsec:HO}

We now prove the Euler-Lagrange equation for problem
\eqref{eq:prb}--\eqref{problema }.

\begin{rem}
\label{rem:Pneeds2rp1points}
In order for the problem
to be nontrivial we require the time scale $\mathbb{T}$
to have at least $2r+1$ points. Indeed, if the time scale has only $2r$
points, then it can be written as
$\mathbb{T}=\{a,\sigma(a),\ldots,\sigma^{2r-1}(a)\}$ and
\begin{multline}
\label{snormal}
\int_{a}^{\rho^{r-1}(b)}
L(t,y(t),y^\Delta(t),\ldots,y^{\Delta^r}(t))\Delta t \\
=\int_{a}^{\sigma^{r}(a)}
L(t,y(t),y^\Delta(t),\ldots,y^{\Delta^r}(t))\Delta t
=\sum_{i=0}^{r-1}\int_{\sigma^i(a)}^{\sigma^{i+1}(a)}L(t,
y(t),y^\Delta(t),\ldots,y^{\Delta^r}(t))\Delta t \\
=\sum_{i=0}^{r-1}(\sigma^{i+1}(a)-\sigma^i(a))L(\sigma^i(a),y(\sigma^i(a)),
y^\Delta(\sigma^i(a)),\ldots,y^{\Delta^r}(\sigma^i(a))).
\end{multline}
Having in mind the boundary conditions and the formula
$f^\Delta(t)=\frac{f(\sigma(t))-f(t)}{\mu(t)},$ we can conclude that
the sum in \eqref{snormal} is constant for every admissible function
$y(\cdot)$.
\end{rem}

\begin{thm}
If $y(\cdot)$ is a weak local minimizer for the problem
\eqref{eq:prb}--\eqref{problema }, then $y(\cdot)$ satisfies the
Euler-Lagrange equation
\begin{multline}
\label{eq:EL} \frac{\partial L}{\partial y^{\Delta^r} }[y](t)
- \int_a^{\sigma(t)} \frac{\partial L}{\partial y^{\Delta^{r-1}}}[y](\tau_r) \Delta \tau_r\\
+ \sum_{i=0}^{r-3} (-1)^i \int_a^{\sigma(t)} \int_a^{\sigma(\tau_r)}
\cdots \int_a^{\sigma(\tau_{r-i})} \frac{\partial L}{\partial
y^{\Delta^{r-2-i}}}[y](\tau_{r-1-i})
\Delta\tau_{r-1-i} \cdots \Delta\tau_{r-1}\Delta\tau_{r}\\
(-1)^r \int_a^{\sigma(t)} \left\{ \int_a^{\sigma(\tau_r)} \left[
\cdots \int_a^{\sigma(\tau_2)} \frac{\partial L}{\partial
y}[y](\tau_1) \Delta\tau_1 + c_1 \cdots \right] \Delta\tau_{r-1}
-(-1)^{r-1} c_{r-1}\right\} \Delta\tau_r - c_r = 0
\end{multline}
for some constants $c_1, \ldots, c_r$ and all $t \in [a,\rho^{r}(b)]_{\T}$.
\end{thm}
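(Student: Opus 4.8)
The plan is to obtain \eqref{eq:EL} from the vanishing of the first variation, to strip off every derivative of the variation except the top one by iterated delta integration by parts, and to close with a fundamental lemma of the calculus of variations adapted to the order-$r$ setting.

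First I would take an admissible variation: a function $\eta \in C_{prd}^r$ with $\eta^{\Delta^i}(a) = \eta^{\Delta^i}(\rho^{r-1}(b)) = 0$ for $i = 0, \ldots, r-1$, so that $y + \varepsilon\eta$ is admissible for all $\varepsilon \in \R$. Because $y$ is a weak local minimizer, $\varepsilon \mapsto \mathcal{L}(y+\varepsilon\eta)$ has a minimum at $\varepsilon = 0$, whence $\frac{d}{d\varepsilon}\mathcal{L}(y+\varepsilon\eta)\big|_{\varepsilon=0} = 0$. Using linearity of the delta derivative, $(y+\varepsilon\eta)^{\Delta^i} = y^{\Delta^i} + \varepsilon\eta^{\Delta^i}$, and differentiating under the integral sign (legitimate by the assumed continuity of the $\partial L/\partial u_i$ together with the piecewise rd-continuity of $t \mapsto \frac{\partial L}{\partial u_i}[y](t)$), this necessary condition becomes the weak form $\int_a^{\rho^{r-1}(b)} \sum_{i=0}^r \frac{\partial L}{\partial u_{i+1}}[y](t)\,\eta^{\Delta^i}(t)\,\Delta t = 0$, required to hold for every such $\eta$.

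Next I would eliminate the terms of order $0, 1, \ldots, r-1$ in $\eta$ one at a time, using the time scale rule $\int_a^{b'} u^\Delta v\,\Delta t = [uv]_a^{b'} - \int_a^{b'} u^\sigma v^\Delta\,\Delta t$. At the $i$th step I write the current coefficient of $\eta^{\Delta^i}$ as $u^\Delta$ for an antiderivative $u$ and integrate by parts against $v = \eta^{\Delta^i}$. The boundary term $[uv]_a^{b'}$ vanishes, since $\eta^{\Delta^i}$ is zero at both endpoints whenever $i \le r-1$, and the remaining integral carries a factor $\eta^{\Delta^{i+1}}$ whose coefficient is the $\sigma$-composition $u^\sigma(t) = \int_a^{\sigma(t)}(\text{previous coefficient})\,\Delta$; this is exactly what produces the upper limits $\sigma(t), \sigma(\tau_r), \ldots$ and the alternating signs displayed in \eqref{eq:EL}. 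Each antiderivative is determined only up to an additive constant, and these surviving constants are $c_1, \ldots, c_{r-1}$, nested as shown. The decisive feature is that this scheme never delta-differentiates a composition with $\sigma$ --- only antiderivatives are composed with $\sigma$ --- so no chain rule is invoked and no restriction on the jump operator is needed, in contrast with \cite{FT,MT}. After $r$ steps the weak form collapses to $\int_a^{\rho^{r-1}(b)} \Phi(t)\,\eta^{\Delta^r}(t)\,\Delta t = 0$ for all admissible $\eta$, where $\Phi$ is the left-hand side of \eqref{eq:EL} before the final subtraction of $c_r$.

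The closing step, and the main obstacle, is a fundamental (DuBois--Reymond type) lemma for the top-order derivative: from $\int_a^{\rho^{r-1}(b)} \Phi\,\eta^{\Delta^r}\,\Delta t = 0$ for every admissible $\eta$ one must conclude that $\Phi$ reduces to a constant $c_r$, which is \eqref{eq:EL}. The difficulty is that $\eta^{\Delta^r}$ does not range over all piecewise rd-continuous functions: recovering an admissible $\eta$ from a prescribed top derivative forces $r$ integral side-conditions coming from the right-endpoint data, so the first-order DuBois--Reymond lemma does not apply directly. One therefore has to show that the annihilator of this constrained class of test functions is spanned by the time-scale monomials $1, \sigma(t)-a, \ldots$ of degree at most $r-1$, and that the freedom already present in $c_1, \ldots, c_{r-1}$ lets one absorb the non-constant part of $\Phi$, leaving only the constant $c_r$. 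Constructing enough variations $\eta$ to pin down this annihilator (here the hypothesis that $\T$ has at least $2r+1$ points, cf.\ Remark~\ref{rem:Pneeds2rp1points}, guarantees that nonzero admissible variations exist) and keeping the bookkeeping of the nested integrals straight is the technical heart of the argument.
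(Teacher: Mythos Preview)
Your route is sound but it is \emph{not} the one the paper takes. The paper does not compute a first variation at all: it rewrites \eqref{eq:prb}--\eqref{problema } as a linear control system with state $x=(y,y^\Delta,\ldots,y^{\Delta^{r-1}})$ and control $u=y^{\Delta^r}$, then invokes the time-scale weak maximum principle of Hilscher and Zeidan (\cite[Theorem~9.4]{zeidan2}). The adjoint equations $p_i^\Delta=-p_{i-1}^\sigma-\lambda\,\partial_{y_i}L$ are solved recursively by integrating from $a$ to $\sigma(t)$, which is exactly what produces the nested integrals with upper limits $\sigma(\cdot)$ and the constants $c_1,\ldots,c_r$ (the latter are simply the initial values $p_i(a)$). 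The technical core of the paper's argument is therefore not a DuBois--Reymond lemma but the \emph{normality} step: showing $\lambda\neq 0$ by arguing that if $\lambda=0$ the adjoint system forces $p\equiv 0$, which uses the $2r+1$-point hypothesis of Remark~\ref{rem:Pneeds2rp1points} in a way parallel to, but distinct from, your use of it.

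What each approach buys: the paper's argument is short once one accepts the optimal-control black box, and the constants $c_i$ appear for free as integration constants of the costate; your approach is self-contained and explains transparently why no restriction on $\sigma$ is needed (only antiderivatives, never $\sigma$-compositions, are delta-differentiated). The price you pay is the higher-order DuBois--Reymond lemma, which you correctly flag as the crux but leave only sketched: you would still need to prove that the annihilator of $\{\eta^{\Delta^r}:\eta\ \text{admissible}\}$ has dimension exactly $r$ and is spanned by the iterated $\sigma$-integrals of $1$, and that these match the freedom in $c_1,\ldots,c_r$. That is doable but is genuine work on a general time scale, whereas the paper offloads the analogous difficulty onto \cite{zeidan2}.
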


\begin{proof}
We first introduce some notation: $y_0(t)=y(t)$,
$y_1(t)=y^\Delta(t)$, \ldots, $y_{r-1}(t)=y^{\Delta^{r-1}}(t)$,
$u(t)=y^{\Delta^r}(t)$. Then problem \eqref{eq:prb}--\eqref{problema
} takes the following form:
\begin{equation*}
\begin{gathered}
\mathcal{L}[y(\cdot)]=\int_{a}^{\rho^{r-1}(b)}L(t,y_0(t),y_1(t),
\ldots,y_{r-1}(t),u(t))\Delta t\longrightarrow\min, \\
 \left\{ \begin{array}{l}
y_i^{\Delta}(t)=y^{i+1}(t), \quad i=0,\ldots,r-2,\\
y_{r-1}^{\Delta}(t)=u(t),
\end{array} \right.\\
y^j(a)=y_a^j,\ y^j\left(\rho^{r-1}(b)\right)=y_b^j,\
j=0,\ldots,r-1 \, .
\end{gathered}
\end{equation*}
With the notation $x=(y_0,y_1,\ldots,y_{r-1})$, our problem
\eqref{eq:prb}--\eqref{problema } can be written as the optimal
control problem
\begin{equation}\label{op:1}
\begin{gathered}
\mathcal{L}[x(\cdot)]=\int_{a}^{\rho^{r-1}(b)}L(t,x(t),u(t))\Delta t\longrightarrow\min, \\
x^\Delta(t)=Ax(t)+Bu(t) \, ,\\
\varphi (x(a),x(\rho^{r-1}(b))= \left[ \begin{array}{l}
x(a)-x_a\\
x(\rho^{r-1}(b))-x_b\end{array} \right]=0 \, ,
\end{gathered}
\end{equation}
where
\begin{equation*}
A = \left(\begin{array}{ccccc}
  0 & 1 & 0 & \cdots & 0 \\
  0 & 0 & 1 & \cdots & 0 \\
  \vdots & \vdots & \vdots & \ddots &  \vdots \\
  0 & 0 & 0 & \cdots & 1 \\
  0 & 0 & 0 & \cdots & 0 \\
\end{array}\right) \, ,
\quad B =\left( \begin{array}{c}
0 \\
\vdots \\
1
\end{array}\right).
\end{equation*}
Note that assumption (A1) of \cite[Theorem~9.4]{zeidan2} holds:
matrix $I+\mu(t)A$ is invertible, and the matrix
$\nabla \varphi(x(a),x(\rho^{r-1}(b))$ has full rank. Therefore, if
$(x(\cdot),u(\cdot))$ is a weak local minimum for \eqref{op:1}, then
there exists a constant $\lambda$ and a function
$p:[a,\rho^{r-1}(b)]_{\T}\rightarrow \R^r$, $p\in C_{prd}^1$,
such that $(\lambda, p(\cdot))\neq 0$ and the following
conditions hold:
\begin{equation*}
-p^{\Delta}(t)=A^Tp^{\sigma}(t)+\lambda\left[\frac{\partial
L}{\partial x}(t,x(t),u(t))\right]^T,
\end{equation*}
\begin{equation}\label{op:3}
B^Tp^{\sigma}(t)+\lambda\frac{\partial L}{\partial u}(t,x(t),u(t))=0
\end{equation}
for all $t\in[a,\rho^{r}(b)]_{\T}$. Consequently, if $y(\cdot)$ is a
weak local minimizer for \eqref{eq:prb}--\eqref{problema }, then
\begin{equation}\label{op:4}
p_{r-1}^{\sigma}(t)=-\lambda\frac{\partial L}{\partial u}[y](t)
\end{equation}
holds for all $t\in [a,\rho^{r}(b)]_{\T}$, where
$p_{r-1}^{\sigma}(t)$ is defined recursively by
\begin{align}
p_{0}^{\sigma}(t)&
=-\int_a^{\sigma(t)}\lambda\frac{\partial L}{\partial y_0}[y](\tau_1)\Delta\tau_1-c_1
\, ,\label{op:5}\\
p_{i}^{\sigma}(t)&=-\int_a^{\sigma(t)}\left[\lambda\frac{\partial
L}{\partial y_i}[y](\tau_{i+1})
+p_{i-1}^{\sigma}(\tau_{i+1})\right]\Delta\tau_{i+1}-c_{i-1}\label{op:6},\
i=1,\ldots,r-1 \, ,
\end{align}
with $c_i$, $i = 0,\ldots, r- 1$, constants. From
\eqref{op:4}--\eqref{op:6} we obtain that equation
\begin{multline}
\label{eq:EL:0} \lambda\frac{\partial L}{\partial u}[y](t)
- \int_a^{\sigma(t)} \lambda\frac{\partial L}{\partial y_{r-1}}[y](\tau_r) \Delta \tau_r\\
+ \sum_{i=0}^{r-3} (-1)^i \int_a^{\sigma(t)} \int_a^{\sigma(\tau_r)}
\cdots \int_a^{\sigma(\tau_{r-i})}\lambda \frac{\partial L}{\partial
y_{r-2-i}}[y](\tau_{r-1-i})
\Delta\tau_{r-1-i} \cdots \Delta\tau_{r-1}\Delta\tau_{r}\\
(-1)^r \int_a^{\sigma(t)} \left\{ \int_a^{\sigma(\tau_r)} \left[
\cdots \int_a^{\sigma(\tau_2)} \lambda\frac{\partial L}{\partial
y_0}[y](\tau_1) \Delta\tau_1 + c_1 \cdots \right] \Delta\tau_{r-1}
-(-1)^{r-1} c_{r-1}\right\} \Delta\tau_r - c_r = 0
\end{multline}
holds for all $t \in [a,\rho^{r}(b)]_{\T}$.
We show next that $\lambda\neq 0$. First observe that if $f\in
C_{prd}^1$ and $f^{\sigma}(t)=0$ for all $t\in [a,b]_{\T}^{\kappa}$,
then $f(t)=0$ for all $t\in [\sigma(a),b]_{\T}$. Suppose,
contrary to our claim, that $\lambda=0$ in equation \eqref{op:3} and
\eqref{op:4}. Then, we can write the system of equations
\begin{equation}
\label{eq:syst:ab} \left\{ \begin{array}{ll}
p_0^\Delta (t)&=0 \, , \\
p_i^\Delta (t)&=-p_{i-1}^\sigma (t), \quad i=1,\ldots, r-1 \, , \\
p_{r-1}^\sigma (t)&=0,
\end{array} \right.
\end{equation}
for all $t\in [a,\rho^{r}(b)]_{\T}$. From the last equation we have
$p_{r-1}(t)=0$, $\forall t\in[\sigma (a),\rho^{r-1}(b)]_{\T}$. This
implies that $p_{r-1}^\Delta(t)=0$, $\forall t\in[\sigma
(a),\rho^{r}(b)]_{\T}$, and consequently $p_{r-2}^\sigma(t)=0$,
$\forall t\in[\sigma (a),\rho^{r}(b)]_{\T}$. Therefore,
$p_{r-2}(t)=0$, $\forall t\in[\sigma^2 (a),\rho^{r-1}(b)]_{\T}$.
Repeating this procedure we have $p_{1}(t)=0$ for all
$t\in[\sigma^{r-1}(a),\rho^{r-1}(b)]_{\T}$. Hence,
$0=p_{1}^\Delta(t)=-p_0^\sigma (t)=-p_0^\Delta
(t)\mu(t)-p_0(t)=-p_0(t)$ for all
$t\in[\sigma^{r-1}(a),\rho^{r}(b)]_{\T}$. Note that the first
equation of \eqref{eq:syst:ab} implies $p_0(t)=c$ for some constant
$c$ and all $t\in[a,\rho^{r-1}(b)]_{\T}$. Since the time scale has
at least $2r+1$ points (see Remark~\ref{rem:Pneeds2rp1points}), the
set $t\in[\sigma^{r-1}(a),\rho^{r-1}(b)]_{\T}$ is nonempty and we
conclude that $p_0(t)=0$ for all $t\in[a,\rho^{r-1}(b)]_{\T}$.
Substituting this into the second equation we get $p_1^\Delta (t)=d$ for
some constant $d$ and all $t\in[a,\rho^{r-1}(b)]_{\T}$. Having in
mind that $p_1(t_0)=0$ for some $t_0 \in[a,\rho^{r-1}(b)]_{\T}$ we
obtain $p_1(t)=0$ for all $t \in[a,\rho^{r-1}(b)]_{\T}$. Repeating
this procedure we conclude that $p_i(t)=0$, $i=1,\ldots,r-1$, for
all $t\in[a,\rho^{r-1}(b)]_{\T}$. This contradicts the fact that
$(\lambda,p(\cdot))\neq 0$. Hence, equation \eqref{eq:EL:0} can be
divided by $\lambda$ and \eqref{eq:EL} is proved.
\end{proof}


\subsection{Corollaries}
\label{subsec:appl}

For illustrating purposes we consider now the two simplest
situations, \textrm{i.e.}, $r=1$ and $r = 2$.

\begin{cor}[\textrm{cf.} \cite{CD:Bohner:2004,zeidan}]
If $y(\cdot)$ is a weak local minimizer for the problem
\begin{equation*}
\mathcal{L}(y(\cdot)) = \int_{a}^{b}
 L(t,y(t),y^\Delta(t))\Delta
t\longrightarrow\min
\end{equation*}
subject to boundary conditions $y(a)=y_a$ and $y(b)=y_b$,
then $y(\cdot)$ satisfies the Euler-Lagrange equation
\begin{equation*}
\frac{\partial L}{\partial y^\Delta}\left(t,y(t),y^\Delta(t)\right)
= \int_a^{\sigma(t)} \frac{\partial L}{\partial y}\left(\tau,y(\tau),
y^\Delta(\tau)\right) \Delta \tau + c_1
\end{equation*}
for some constant $c_1$ and all $t \in [a,b]_{\T}^\kappa$.
\end{cor}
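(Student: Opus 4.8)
The plan is to read the Corollary off the main Theorem as its smallest instance $r=1$, since the data of the Corollary are exactly the data of problem \eqref{eq:prb}--\eqref{problema } specialized to $r=1$: the functional $\int_a^b L(t,y(t),y^\Delta(t))\,\Delta t$ is \eqref{eq:prb} with upper limit $\rho^{r-1}(b)=\rho^0(b)=b$, and the endpoint conditions $y(a)=y_a$, $y(b)=y_b$ are \eqref{problema } with $r=1$. Thus a weak local minimizer in the sense of the Corollary is a weak local minimizer for \eqref{eq:prb}--\eqref{problema }, and it remains only to specialize the Euler-Lagrange equation \eqref{eq:EL}.

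Rather than unwind the collapsed form of \eqref{eq:EL} directly, I would return to the recursion \eqref{op:4}--\eqref{op:6} from which \eqref{eq:EL} is assembled, because for $r=1$ the nested-integral block and the telescoping of the constants $c_1,\ldots,c_r$ in \eqref{eq:EL} degenerate and are easy to misread. For $r=1$ only the adjoint variable $p_0$ is present, and $u=y^{\Delta^r}=y^\Delta$, $y_0=y$. Equation \eqref{op:4} then reads $p_0^\sigma(t)=-\lambda\frac{\partial L}{\partial y^\Delta}[y](t)$, while \eqref{op:5} gives $p_0^\sigma(t)=-\int_a^{\sigma(t)}\lambda\frac{\partial L}{\partial y}[y](\tau)\,\Delta\tau-c_1$. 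Equating the two expressions for $p_0^\sigma(t)$, invoking $\lambda\neq 0$ exactly as in the proof of the Theorem, and dividing by $\lambda$ (absorbing the multiplier into a renamed constant $c_1$) yields $\frac{\partial L}{\partial y^\Delta}[y](t)=\int_a^{\sigma(t)}\frac{\partial L}{\partial y}[y](\tau)\,\Delta\tau+c_1$, which is the asserted equation once $[y]$ is written out as $(t,y(t),y^\Delta(t))$.

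The only point requiring a word of care is the set on which the equation holds. The Theorem asserts validity on $[a,\rho^{r}(b)]_{\T}$, which for $r=1$ is $[a,\rho(b)]_{\T}$; since $\T^\kappa=\T\setminus(\rho(b),b]$ gives $[a,b]_{\T}^\kappa=[a,\rho(b)]_{\T}$, this coincides with the set $[a,b]_{\T}^\kappa$ stated in the Corollary. I expect the main obstacle to be purely bookkeeping, namely confirming that the degenerate $r=1$ reading of \eqref{eq:EL} agrees with the clean derivation from \eqref{op:4}--\eqref{op:5}; going through the recursion sidesteps this and makes the single arbitrary constant $c_1$ transparent, recovering the first-order equation of \cite{CD:Bohner:2004,zeidan}.
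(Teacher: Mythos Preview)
Your proposal is correct and is exactly the approach the paper implies: the Corollary is stated without a separate proof, as the $r=1$ instance of the main Theorem, so specializing \eqref{eq:prb}--\eqref{problema } and the Euler--Lagrange conclusion to $r=1$ is all that is required. Your choice to read off the result from the recursion \eqref{op:4}--\eqref{op:5} rather than from the collapsed display \eqref{eq:EL} is a sensible bookkeeping shortcut (since for $r=1$ the sum in \eqref{eq:EL} is empty and the last block degenerates), but it is not a different method---it is the same argument at a slightly earlier stage.
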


\begin{cor}[\textrm{cf.} \cite{FT,MT}]
If $y(\cdot)$ is a weak local minimizer for the problem
\begin{equation*}
\mathcal{L}(y(\cdot)) = \int_{a}^{\rho(b)}
 L(t,y(t),y^\Delta(t),y^{\Delta \Delta})\Delta
t\longrightarrow\min
\end{equation*}
subject to boundary conditions
$y(a)=y_a^0$, $y(\rho(b))=y_b$,
$y^\Delta(a)=y_a^1$, and $y^\Delta(\rho(b))=y_b^1$,
then $y(\cdot)$ satisfies the Euler-Lagrange equation
\begin{multline*}
\frac{\partial L}{\partial
y^{\Delta\Delta}}\left(t,y(t),y^\Delta(t),y^{\Delta\Delta}(t)\right)
- \int_a^{\sigma(t)} \frac{\partial L}{\partial
y^\Delta}\left(\tau_2,y(\tau_2),
y^\Delta(\tau_2),y^{\Delta\Delta}(\tau_2)\right)
\Delta \tau_2 \\
+ \int_a^{\sigma(t)} \left[ \int_a^{\sigma(\tau_2)} \frac{\partial
L}{\partial y}\left(\tau_1,y(\tau_1),
y^\Delta(\tau_1),y^{\Delta\Delta}(\tau_1)\right)\Delta\tau_1 + c_1
\right] \Delta\tau_2 - c_2 = 0
\end{multline*}
for some constants $c_1$ and $c_2$ and all $t \in [a,\rho(b)]_{\T}^\kappa$.
\end{cor}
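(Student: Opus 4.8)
The plan is to obtain this corollary as the special case $r = 2$ of the preceding theorem, so that no new analytic work is required. First I would check that the problem in the statement is exactly \eqref{eq:prb}--\eqref{problema } with $r = 2$: the Lagrangian $L(t,y(t),y^\Delta(t),y^{\Delta\Delta}(t))$ has highest delta derivative of order $2$ (here $y^{\Delta\Delta} = y^{\Delta^2}$), the upper integration limit $\rho(b) = \rho^{r-1}(b)$ agrees, and the four prescribed endpoint values $y(a) = y_a^0$, $y(\rho(b)) = y_b$, $y^\Delta(a) = y_a^1$, $y^\Delta(\rho(b)) = y_b^1$ are precisely the boundary conditions \eqref{problema } for $r = 2$. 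Hence every weak local minimizer $y(\cdot)$ of this problem satisfies \eqref{eq:EL} with $r = 2$.

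Next I would substitute $r = 2$ into \eqref{eq:EL} and simplify the three groups of terms. The decisive simplification is that the middle summation $\sum_{i=0}^{r-3}$ becomes $\sum_{i=0}^{-1}$, an empty sum, and therefore disappears entirely. The first term $\frac{\partial L}{\partial y^{\Delta^r}}[y](t)$ becomes $\frac{\partial L}{\partial y^{\Delta\Delta}}[y](t)$; the second term becomes $-\int_a^{\sigma(t)} \frac{\partial L}{\partial y^\Delta}[y](\tau_2)\,\Delta\tau_2$; and the last term, carrying the factor $(-1)^r = 1$, collapses (the telescoping $\cdots$ now denotes no intermediate integrations) to the double integral $\int_a^{\sigma(t)} \bigl[\int_a^{\sigma(\tau_2)} \frac{\partial L}{\partial y}[y](\tau_1)\,\Delta\tau_1 + c_1\bigr]\Delta\tau_2 - c_2$. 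Assembling these three pieces reproduces verbatim the Euler-Lagrange equation displayed in the corollary; alternatively one can read it off directly from the recursion \eqref{op:4}--\eqref{op:6} specialized to $r = 2$, which is what I would do to pin down the precise placement of the constants $c_1, c_2$.

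Finally I would reconcile the set on which the equation is asserted. The theorem gives \eqref{eq:EL} for all $t \in [a,\rho^{r}(b)]_{\T}$, which for $r = 2$ reads $[a,\rho^{2}(b)]_{\T}$; I would verify that this coincides with the set $[a,\rho(b)]_{\T}^{\kappa}$ appearing in the corollary, since applying the $\kappa$-operation to the time scale $[a,\rho(b)]_{\T}$ removes exactly the half-open piece $(\rho^{2}(b),\rho(b)]$. The only thing demanding care here is bookkeeping --- the empty-sum convention, the signs, and the domain identification --- and there is no genuine obstacle: the substance of the argument (the reduction to a linear optimal control problem, the adjoint recursion, and the normality proof $\lambda \neq 0$) has already been carried out in the proof of the theorem.
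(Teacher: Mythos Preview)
Your proposal is correct and follows exactly the route the paper intends: the corollary is stated without a separate proof because it is simply the specialization $r=2$ of the main theorem, and your bookkeeping of the empty sum $\sum_{i=0}^{-1}$, the signs, the placement of $c_1,c_2$ (most cleanly read off from the recursion \eqref{op:4}--\eqref{op:6}), and the identification $[a,\rho^{2}(b)]_{\T}=[a,\rho(b)]_{\T}^{\kappa}$ is accurate.
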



\subsection{An example}
\label{subsec:ex}

Let $\T=[a,b]\cap h\Z$, where $h\mathbb{Z}:=\{h z | z \in
\mathbb{Z}\}$, $h>0$. Then for any $f\in C_{prd}^{r}$ we
have
\begin{align}
\label{eq:exGC} {\underbrace{\left[\int_a^{\sigma(t)}
\left(\int_a^\sigma \cdots \int_a^\sigma f \right) \Delta
\tau\right]}_{j-i\text{ integrals}}}^{\Delta^j} = f^{\Delta^i
\sigma^{j-i}} \, , \quad i \in \{0,\ldots,j-1\} \, ,
\end{align}
where $f^{\Delta^i \sigma^{j-i}}(t)$ stands for
$f^{\Delta^i}(\sigma^{j-i}(t))$. We will show this by induction. For
$j = 1$
\begin{equation*}
\int_a^{\sigma(t)}f(\xi)\Delta\xi=\int_a^{t}f(\xi)\Delta\xi
+\int_t^{t+h}f(\xi)\Delta\xi=\int_a^{t}f(\xi)\Delta\xi+hf(t),
\end{equation*}
and then $\left[\int_a^{\sigma(t)}f(\xi)\Delta\xi\right]^\Delta
=f(t)+hf^\Delta(t) = f^\sigma$. Now assume that \eqref{eq:exGC} is
true for all $j = 1,\ldots,k$. Then for $j=k+1$
\begin{multline*}
{\underbrace{\left[\int_a^{\sigma(t)} \left( \int_a^\sigma \cdots
\int_a^\sigma f \right)
\Delta \tau\right]}_{k+1-i\text{ integrals}}}^{\Delta^{k+1}}
= \left( \underbrace{\int_a^{t} \int_a^\sigma \cdots
\int_a^\sigma}_{k+1-i} f \Delta \tau +
h\underbrace{\int_a^{\sigma(t)} \cdots \int_a^\sigma}_{k-i} f \Delta
\tau
\right)^{\Delta^{k+1}} \\
= \left(\underbrace{\int_a^{\sigma(t)} \cdots \int_a^\sigma}_{k-i}
f \Delta \tau \right)^{\Delta^{k}} +
\left[h\left(\underbrace{\int_a^{\sigma(t)} \cdots
\int_a^\sigma}_{k-i} f \Delta \tau
\right)^{\Delta^{k}}\right]^{\Delta}
= f^{\Delta^i \sigma^{k-i}} + \left(hf^{\Delta^i \sigma^{k-i}}\right)^\Delta
= f^{\Delta^i \sigma^{k+1-i}} \, .
\end{multline*}
Delta differentiating $r$ times both sides of equation \eqref{eq:EL}
and in view of \eqref{eq:exGC}, we obtain the $h$-Euler-Lagrange
equation in delta differentiated form:
\begin{equation*}
L_{y^{\Delta^r}}^{\Delta^r}(t,y,y^\Delta,\ldots,y^{\Delta^r}) +
\sum_{i=0}^{r-1} (-1)^{r-i} L_{y^{\Delta^{i}}}^{\Delta^i
\sigma^{r-i}}(t,y,y^\Delta,\ldots,y^{\Delta^r}) =0.
\end{equation*}


\section*{Acknowledgments}

This work was partially supported by the \emph{Portuguese Foundation
for Science and Technology} (FCT) through the \emph{Center for Research
and Development in Mathematics and Applications} (CIDMA) of University of Aveiro.
The first author was also supported by FCT through the PhD fellowship
SFRH/BD/39816/2007; the second author is currently a researcher
at the University of Aveiro with the support
of Bia{\l}ystok University of Technology, via a project of
the Polish Ministry of Science and Higher Education ``Wsparcie
miedzynarodowej mobilnosci naukowcow''; the third author
was partially supported by the Portugal--Austin (USA)
project UTAustin/MAT/0057/2008.



\end{document}